\newtheorem{theorem}{Theorem}
\newtheorem{lemma}[theorem]{Lemma}
\date{}
\begin{document}
	
\newcounter{casenum}
\newenvironment{caseof}{\setcounter{casenum}{1}}{\vskip.5\baselineskip}
\newcommand{\case}[2]{\vskip.5\baselineskip\par\noindent {\bfseries Case \arabic{casenum}:} #1\\#2\addtocounter{casenum}{1}}
	
\title{EMSO(FO$^2$) 0-1 law fails for all dense random graphs}

\author{M. Akhmejanova\footnote{Moscow Institute of Physics and Technology, Laboratory of Combinatorial and Geometric structures, Dolgoprudny, Russia}, M. Zhukovskii\footnotemark[\value{footnote}] \footnote{Adyghe State University, Caucasus mathematical center, Maykop, Republic of Adygea, Russia} \footnote{The Russian Presidential Academy of National Economy and Public Administration, Moscow, Russia}}


\maketitle

\begin{center}
{\bf Abstract}
\end{center}

In this paper, we disprove EMSO(FO$^2$) convergence law for the binomial random graph $G(n,p)$ for any constant probability $p$. More specifically, we prove that there exists an existential monadic second order sentence with 2 first order variables such that, for every $p\in(0,1)$, the probability that it is true on $G(n,p)$ does not converge.

\vspace{0.5cm}

\section{Introduction}
\label{sec:intro}

For undirected graphs, {\it sentences in the monadic second order logic} (MSO sentences) are constructed using relational symbols $\sim$ (interpreted as adjacency) and $=$, logical connectives $\neg,\rightarrow,\leftrightarrow,\vee,\wedge$, first order (FO) variables $x,y,x_1, \ldots$ that express vertices of a graph, MSO variables $X,Y,X_1,\ldots$ that express unary predicates, quantifiers $\forall,\exists$ and parentheses (for formal definitions, see~\cite{Libkin}). If, in an MSO sentence $\phi$, all the MSO variables are existential and in the beginning, then the sentence is called {\it existential} monadic second order (EMSO). For example, the EMSO sentence
$$
 \exists X \quad[\exists x_1\exists x_2\,\,X(x_1)\wedge \neg X(x_2)]\wedge\neg[\exists y \exists z\,\,X(y)\wedge\neg X(z)\wedge y\sim z]
$$
expresses the property of being disconnected. Note that this sentence has 1 monadic variable and 4 FO variables but it can be easily rewritten with only 2 FO variables by identifying $y$ with $x_1$ and $z$ with $x_2$. In what follows, for a sentence $\phi$, we use the usual notation from model theory $G\models\phi$ if $\phi$ is true for $G$.\\

In~\cite{Kaufmann_Shelah}, Kaufmann and Shelah disproved the MSO 0-1 law ({\it 0-1 law for a logic $\mathcal{L}$} states that every sentence $\varphi\in\mathcal{L}$ is either true on (asymptotically) almost all graphs on the vertex set $[n]:=\{1,\ldots,n\}$ as $n\to\infty$, or false on almost all graphs). Moreover, they even disproved a weaker logical law which is called the MSO convergence law ({\it convergence law for a logic $\mathcal{L}$} states that, for every sentence $\varphi\in\mathcal{L}$, the fraction of graphs on the vertex set $[n]$ satisfying $\varphi$ converges as $n\to\infty$). In terms of random graphs, their result can be formulated as follows: there exists an MSO sentence $\varphi$ such that ${\sf P}(G(n,1/2)\models\varphi)$ does not converge as $n\to\infty$. Recall that, for $p\in(0,1)$, the binomial random graph $G(n,p)$ is a graph on $[n]$ with each pair of vertices connected by an edge with probability $p$ and independently of other pairs. For more information, we refer readers to the books \cite{AS,Bollobas,Janson}. In contrast, $G(n,1/2)$ obeys first-order (FO) 0-1 law~\cite{Fagin,Glebskii}. In 2001, Le Bars~\cite{Le_Bars} disproved EMSO convergence law for $G(n,1/2)$ and conjectured that, for EMSO sentences with 2 FO variables (or, shortly, EMSO(FO$^2$) sentences), $G(n,1/2)$ obeys the zero-one law. In 2019, Popova and the second author~\cite{Popova} disproved this conjecture. Notice that all the above results but the last one can be easily generalized to arbitrary constant edge probability $p$. In~\cite{Popova}, it is noticed that the Le Bars conjecture fails for a dense set of $p\in(0,1)$. In this paper, we disprove the Le Bars conjecture for all $p\in(0,1)$. We prove something even stronger: there exists a EMSO(FO$^2$) sentence $\varphi$ such that, for every $p\in(0,1)$, $\{\mathbf{P}[G(n, p) \models \varphi]\}_{n}$ does not converge. Notice that this one sentence disproves the convergence law for all $p$. Let us define the sentence.\\


Let $X(k, \ell, m)$ be the number of 6-tuples $\left(X_{1},x_1,X_{2},x_2,X_3,x_3\right)$, consisting of sets $X_1,X_2,X_3\subset[n]$ and vertices $x_1\in X_1$, $x_2\in X_2$, $x_3\in X_3$, such that
\begin{itemize}
\item $|X_1|=k,|X_2|=\ell,|X_3|=m$ and $X_i\cap X_j=\varnothing$ for $i\neq j$,
\item each $X_i$ dominates $[n]\setminus{(X_{1} \sqcup X_{2}\sqcup X_3)}$, i.e. every vertex from $[n]\setminus{(X_{1} \sqcup X_{2}\sqcup X_3)}$ has at least one neighbor in each $X_i$,
\item for any distinct $i,j\in\{1,2,3\}$, there is exactly one edge between $X_i$ and $X_j$ --- namely, the edge between $x_i$ and $x_j$.
\end{itemize}

\bigskip
\begin{theorem}\label{mnt:theorem}
For any constant $p\in(0,1)$,  ${\sf P}(\exists k,\ell,m\,\,X(k,\ell,m)>0)$ does not converge as $n\to\infty$.
\end{theorem}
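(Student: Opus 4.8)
The plan is to show that the probability of the event $\exists k,\ell,m\, X(k,\ell,m)>0$ oscillates by exhibiting two subsequences of $n$ along which this probability tends to different limits. Let me think about the structure of the configuration we're counting.

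We have three disjoint sets $X_1, X_2, X_3$ with one distinguished vertex each, where each set dominates the complement, and between any two sets there is *exactly one* edge (the one joining the distinguished vertices). The key tension: domination wants the sets to be reasonably large (a single set of size $k$ fails to dominate a fixed outside vertex with probability $(1-p)^k$, so across roughly $n$ outside vertices we need $k \gtrsim \log_{1/p} n$ for domination to be likely). On the other hand, "exactly one edge between $X_i$ and $X_j$" is an extremely restrictive condition: between sets of sizes $k$ and $\ell$ there are $k\ell$ potential edges, and having exactly one of them present has probability $\binom{k\ell}{1}p(1-p)^{k\ell-1}$, which for $k,\ell \sim c\log n$ behaves like $(\log n)^2 \cdot p \cdot (1-p)^{(c\log n)^2}$ — super-polynomially small. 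So the sizes must be pushed down near the domination threshold, and this is where the oscillation comes from: the relevant count is sharply sensitive to the value of $\log_{1/p} n$ modulo $1$.

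Let me set up the first and second moment computation, which I expect to be the heart of the argument. Fix the three distinguished vertices and the three sets; the probability of the full configuration factors (by independence of disjoint edge slots) into a product of a domination factor and an ``exactly one cross edge'' factor. I would write $k = k_0 + a$, $\ell = k_0 + b$, $m = k_0 + c$ for small integer offsets around a base value $k_0 = k_0(n)$ chosen near $\log_{1/p} n$, and compute the expected number of configurations $\mathbf{E}\,X(k,\ell,m)$. The domination factor for one set $X_i$ against the $n - k - \ell - m$ outside vertices is $\bigl(1 - (1-p)^{|X_i|}\bigr)^{n-k-\ell-m}$; the three cross-edge factors contribute $p^3$ times a product of $(1-p)$ raised to $k\ell + k m + \ell m - 3$. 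The multinomial counting factor is roughly $\binom{n}{k,\ell,m}\cdot k\ell m$ (choosing the sets and the distinguished vertices). The plan is to show that as a function of $n$, this expectation — summed over the small offsets $a,b,c$ — does not converge: along a subsequence where $\{\log_{1/p} n\}$ sits in one range the sum is bounded away from $0$, and along another subsequence it tends to $0$.

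The main obstacle, and where the real work lies, is twofold. First, converting expectation bounds into probability bounds: when the expected count is large I must run a second-moment argument to conclude $X(k,\ell,m)>0$ with probability bounded below, and the dominating-set and single-edge constraints create non-trivial correlations between overlapping configurations, so estimating $\mathbf{E}[X^2]$ and showing the variance is controlled requires care. Second, and more delicate, is the oscillation mechanism itself: I must identify two sequences $n_j \to \infty$ and $n'_j \to \infty$ — distinguished by the fractional part of $\log_{1/p} n$ — such that along $n_j$ the second moment method gives $\mathbf{P}(\exists k,\ell,m\, X>0)\to 1$ (or a positive constant), while along $n'_j$ the first moment (union) bound over all admissible triples $(k,\ell,m)$ gives $\mathbf{P}\to 0$. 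The arithmetic subtlety is that $(1-p)^{k}$ changes by the constant factor $(1-p)$ each time $k$ increments, so the domination factor $\bigl(1-(1-p)^k\bigr)^n$ transitions sharply from near $0$ to near $1$ as $k$ crosses $\log_{1/p} n$; pinning down that this transition is genuinely non-convergent (rather than being smoothed out by summing over offsets) is the crux, and I would handle it by a careful analysis of $n(1-p)^{k}$ as $n$ ranges over the integers, exploiting that $\{\log_{1/p} n\}$ is equidistributed-like / at least visits both a neighborhood of $0$ and a neighborhood well away from $0$ infinitely often.
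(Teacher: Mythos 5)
Your plan is, in outline, exactly the paper's proof: two explicit subsequences of $n$, distinguished by the fractional part of the critical set size; along one, a union (first-moment) bound over all triples $(k,\ell,m)$ gives probability $\to 0$; along the other, Paley--Zygmund applied to $X(k,k,k)$ at the optimal integer $k$ gives probability bounded away from $0$. So the approach is the right one. But the proposal stops precisely where the work begins, and the two deferred items are genuine gaps, not routine verifications. For the first-moment half: you must locate the maximizer of $\ln \mathsf{E}X(k,\ell,m)$ precisely and prove quantitative decay away from it. The paper shows the critical point is $k^*=\frac{\ln n-\ln\ln n+\ln\ln\frac{1}{1-p}}{\ln\frac{1}{1-p}}+o(1)$ --- note the $\ln\ln n$ correction, so the relevant quantity is not the fractional part of $\log_{1/(1-p)}n$ itself (also, your logarithm base should be $\frac{1}{1-p}$, not $\frac{1}{p}$) --- and proves (its Lemma~2) that a displacement $\Delta_i$ in each coordinate from $k^*$ costs a factor of order $e^{-\frac{1}{2}\ln\frac{1}{1-p}(1+o(1))\,\Delta_i^2 \ln n}$, i.e.\ polynomially small in $n$ per unit squared displacement. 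This quadratic, $\ln n$-rate decay is what rules out the ``smoothing by summing over offsets'' scenario you correctly flag; nothing in your sketch supplies it. By contrast, the equidistribution point you call the crux is a non-issue: no equidistribution is needed, since one simply \emph{defines} $n^{(1)}_i=\lfloor(\tfrac{1}{1-p})^{i+1/2}\,i\rfloor$ and $n^{(2)}_i=\lfloor(\tfrac{1}{1-p})^{i}\,i\rfloor$ (the extra factor $i$ compensating the $\ln\ln n$ shift), which forces $k^*=i+\tfrac12+o(1)$ and $k^*=i+o(1)$ respectively.

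For the second-moment half, the missing work is larger. Along the good sequence the maximal expectation is only polylogarithmic, $\mathsf{E}X(k,k,k)\sim \frac{p^3}{(\sqrt{2\pi}(1-p))^3}\,k^{3/2}$, so there is no slack: you must prove $\mathsf{E}[X^2]=O\bigl((\mathsf{E}X)^2\bigr)=O(k^3)$ on the nose. The paper splits $\mathsf{E}[X^2]$ according to the overlap $r=|(X_1\sqcup X_2\sqcup X_3)\cap(Y_1\sqcup Y_2\sqcup Y_3)|$ into medium, small, and large ranges, and the large-overlap range is the delicate one. It needs two nontrivial ingredients: a conditional bound $\mathsf{P}(\mathcal{N}_{\mathcal{Y}}\mid\mathcal{N}_{\mathcal{X}})\le(1-p)^{3k^2-r^2/3-3}$, where $\mathcal{N}_{\mathcal{X}}$, $\mathcal{N}_{\mathcal{Y}}$ are the events that there are no cross edges other than those between the distinguished vertices; and a combinatorial argument that a second special tuple nearly coinciding with a given one has very few completions --- e.g.\ the distinguished vertices $(y_1,y_2,y_3)$ admit only $O(k)$ choices, because two of them cannot lie in distinct $X_j$'s without creating forbidden $X_i$--$X_j$ edges. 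None of this is generic Chebyshev bookkeeping; it is the bulk of the paper, and ``the correlations require care'' does not yet constitute a proof.
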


Clearly, the property $\{\exists k,\ell,m\,\,X(k,\ell,m)>0\}$ can be defined in EMSO(FO$^2$), e.g., by the following sentence:
$$
 \exists X_1\exists X_2\exists X_3\quad\mathrm{DIS}(X_1,X_2,X_3)\wedge\mathrm{DOM}(X_1,X_2,X_3)\wedge\phi_1(X_1,X_2,X_3)\wedge\phi_2(X_1,X_2,X_3),
$$
where the formula
$$
 \mathrm{DIS}(X_1,X_2,X_3)=\bigwedge_{1\leq i<j\leq 3}\biggr(\forall x \forall y\quad [X_i(x)\wedge X_j(y)]\Rightarrow [x\neq y]\biggl)
$$
says that $X_1,X_2,X_3$ are disjoint; the formula
$$
 \mathrm{DOM}(X_1,X_2,X_3)=\forall x\quad \biggl[\neg(X_1(x)\vee X_2(x)\vee X_3(x))\biggr]\Rightarrow\biggl[\bigwedge_{j=1}^3(\exists y\,\,\, X_j(y)\wedge (x\sim y))\biggr]
$$
says that each vertex from $[n]\setminus(X_1\sqcup X_2\sqcup X_3)$ has a neighbor in each $X_i$; the formula
$$
 \phi_1(X_1,X_2,X_3)=\bigwedge_{i=1}^3\biggl(\exists x\quad X_i(x)\wedge\biggl(\forall y\,\,\, [(y\neq x)\wedge X_i(y)]\Rightarrow\biggl[\forall x\,\,\,\biggl(\bigvee_{j\neq i} X_j(x)\biggr)\Rightarrow (x\nsim y)\biggr]\biggr)\biggr)
$$
says that, for every $i\in\{1,2,3\}$, there is at most one vertex that has neighbors in sets $X_j$, $j\neq i$; the formula
$$
 \phi_2(X_1,X_2,X_3)=\bigwedge_{1\leq i<j\leq 3}\biggl(\exists x\exists y\quad X_i(x)\wedge X_j(y)\wedge(x\sim y)\biggr)
$$
says that, for any two distinct $X_i,X_j$, there exists an edge between them. Clearly, $\phi_1\wedge\phi_2$ is true if and only if there exist $x_1\in X_1$, $x_2\in X_2$, $x_3\in X_3$, such that, for any distinct $i,j\in\{1,2,3\}$, there is exactly one edge between $X_i$ and $X_j$ --- the edge between $x_i$ and $x_j$.\\

We prove Theorem~\ref{mnt:theorem} in the following way. First, we show that, for some sequence of positive integers $(n^{(1)}_i,\,i\in\mathbb{N})$, $\sum_{k,\ell,m}{\sf E}X(k,\ell,m)\to 0$ (random variables are defined on $G(n^{(1)}_i,p)$) as $i\to\infty$. Then, we show that, for another sequence $(n^{(2)}_i,\,i\in\mathbb{N})$, there exists $k=k(i)$ such that ${\sf P}(X(k,k,k)>0)$ is bounded away from $0$ for all large enough $i$ (using second moment methods). 

We compute ${\sf E}X(k,\ell,m)$ and study its behavior in Section~\ref{expected_value_section}. Sections \ref{n_alpha_section},~\ref{n_beta_section} present the sequences $n^{(1)}_i$, $n^{(2)}_i$ respectively and prove that they are as desired. \\

{\it Remark.} It is easy to see, using the union bound, that with asymptotical probability 1 in $G(n,p)$, there are no three sets $X_1,X_2,X_3$ such that each $X_i$ dominates $[n]\setminus(X_1\sqcup X_2\sqcup X_3)$ and there are no edges between distinct $X_i$ and $X_j$. It means that there exists a sequence $\left\{n_{i}\right\}_{i}$ such that, with a probability that is bounded away from 0  for large enough $i$, one can remove at most 3 edges from $G(n_i,p)$ such that the modified graph and $G(n_i,p)$ are EMSO(FO$^2$)--distinguishable. On the other hand, it is impossible to remove a bounded number of edges from $G(n,p)$ to make it FO--distinguishable from the original graph (with a probability that is bounded away from $0$  for large enough $n$). Indeed, the FO almost sure theory $\mathcal{T}$ of $G(n,p)$ is complete and its set of axioms $\mathcal{E}$ consists of so called extension axioms (see, e.g.,~\cite{Strange}). It is straightforward that all axioms from $\mathcal{E}$ hold with asymptotical probability 1 after a deletion of any bounded set of edges from $G(n,p)$. From the completeness and the FO 0-1 law, our observation follows.

\section{Expectation}\label{expected_value_section}

Let $D_n:=\{x,y,z\geq 1:\,x+y+z\leq n\}$ 
 and consider integers $k,\ell,m\in D_n$. Then, clearly,
\begin{multline}\label{expected_value}
{\sf E}X(k,\ell,m)=\frac{n!}{k!\ell!m!(n-k-\ell-m)!}(k\cdot \ell\cdot m)\times(1-p)^{k\ell+\ell m+km-3} p^3\times\\
\times\prod_{v\in[n]\setminus{(X_{1} \cup X_{2}\cup X_3)}}\left[(1-(1-p)^k)(1-(1-p)^{\ell})(1-(1-p)^m)\right]\leq
\end{multline}

\begin{multline}
\frac{n^{k+\ell+m}e^{k+\ell+m}}{k^k \ell^{\ell} m^m} \exp\biggl(\ln (k\ell m)+(k\ell+km+\ell m-3)\ln(1-p)+3\ln p\\
-(n-k-\ell-m)[(1-p)^k+(1-p)^{\ell}+(1-p)^m]\biggr)= e^{f(k,\ell,m)+g(k,\ell,m)},
\label{expectation_bound_f}
\end{multline}
where $f$ and $g$ are two functions defined on $D_n$ as follows:
\begin{align}
f(k,\ell,m)= & \, k\ln(n/k)+\ell\ln(n/\ell)+m\ln(n/m)+\ln(k\ell m)+k+\ell+m\notag\\ &-n((1-p)^k+(1-p)^{\ell}+(1-p)^m)+(k\ell+km+\ell m-3)\ln(1-p)+3\ln p,
\label{expectation:power_of_exponent}
\end{align}
\begin{equation}
g(k,\ell,m)= (k+\ell+m)[(1-p)^k+(1-p)^{\ell}+(1-p)^m].
\label{expectation:power_of_exponent_reminder}
\end{equation}


Let us now compute the partial derivatives:
\begin{align*}
&\frac{\partial f}{\partial k}=\ln\frac{n}{k}+(\ell+m)\ln(1-p)+\frac{1}{k}
-n(1-p)^k\ln(1-p)+1,\\
&\frac{\partial^{2} f}{\partial k^{2}}=-\frac{1}{k}-\frac{1}{k^2}-n(1-p)^k\ln^2(1-p),\\
&\frac{\partial^{2} f}{\partial k \partial \ell}=\frac{\partial^{2} 
 f}{\partial \ell \partial m}=\frac{\partial^{2} f}{\partial k \partial m}=\ln(1-p).
\end{align*}
Other derivatives can be obtained by using the symmetry of $f$. Let us find $k^*$ such that $\left.\frac{\partial f}{\partial k}\right|_{(k^*,k^*,k^*)}=\left.\frac{\partial f}{\partial \ell}\right|_{(k^*,k^*,k^*)}=\left.\frac{\partial f}{\partial m}\right|_{(k^*,k^*,k^*)}=0$. There is exactly one such $k^*$ since the equation
$$
 \ln\frac{n}{k}+2k\ln(1-p)+\frac{1}{k}
-n(1-p)^k\ln(1-p)+1=0
$$
has the unique solution
\begin{equation}
 k^*=\frac{\ln n-\ln\ln n+\ln\ln\frac{1}{1-p}}{\ln\frac{1}{1-p}}+O\left(\frac{\ln\ln n}{\ln n}\right).
\label{k_star}
\end{equation}
Let us show that $A=(k^*,k^*,k^*)$ is a point of local maximum of $f$ for all $n$ large enough. Consider the Hessian matrix 
\begin{equation*}
C=\left(
\begin{array}{cccc}
\left.\frac{\partial^{2} f}{\partial k^2}\right|_A &\left.\frac{\partial^{2} f}{\partial k \partial \ell}\right|_A& \left.\frac{\partial^{2} f}{\partial k \partial m}\right|_A \\
\\
\left.\frac{\partial^{2} f}{\partial k \partial \ell}\right|_A & \left.\frac{\partial^{2} f}{\partial \ell^2}\right|_A & \left.\frac{\partial^{2} f}{\partial\ell \partial m}\right|_A \\
\\
\left.\frac{\partial^{2} f}{\partial k \partial m }\right|_A & \left.\frac{\partial^{2} f}{\partial\ell \partial m}\right|_A & \left.\frac{\partial^{2} f}{\partial m^2}\right|_A \\
\end{array}
\right)=\ln(1-p)\left(
\begin{array}{cccc}
\ln n(1+o(1)) & 1 & 1 \\
\\
1 & \ln n(1+o(1)) & 1 \\
\\
1 & 1 & \ln n(1+o(1)) \\
\end{array}
\right).
\end{equation*}
By Sylvester's criterion \cite[Theorem 7.2.5]{Sylvest}, it is negative-definite for all $n$ large enough: the leading principal minors equal
$$
 \ln(1-p)\ln n(1+o(1))<0,
$$
$$
 \det\left[\ln(1-p)\left(\begin{array}{cc}
\ln n(1+o(1)) & 1 \\
1 & \ln n(1+o(1))
\end{array}\right)\right]
=\ln^2(1-p)\ln^2n(1+o(1))>0,
$$
$$
\det C=\ln^3(1-p)\ln^3 n(1+o(1))<0.
$$
Therefore, $A$ is indeed a local maximum point.\\


We have
\begin{align*}
f(A) &= 3k^*\left(\ln n-\ln k^*\right)-3n(1-p)^{k^*}+3(k^{*})^2\ln(1-p)+3 k^*+O(\ln\ln n)\\
& =3k^*\left(\ln n-\ln k^*+k^*\ln (1-p)+1\right)-\frac{3\ln n}{\ln\frac{1}{1-p}}+O(\ln\ln n)\\
& =3k^*-\frac{3\ln n}{\ln\frac{1}{1-p}}+O(\ln\ln n)=O(\ln\ln n).
\end{align*}

Notice that $k^*$ is not necessarily an integer. In Section~\ref{n_alpha_section}, we show that $n$ can be chosen in a way such that $k^*=\lfloor k^*\rfloor+\frac{1}{2}+o(1)$. In this case, the following lemma appears to be useful for bounding from above ${\sf E}X(k,\ell,m)$ for all possible $k,\ell,m$ (in particular, it implies that, for such $n$, $f(A)$ bounds from above $f(k,\ell,m)$ for all integer $(k,\ell,m)\in D_n$).\\


\begin{lemma}
Uniformly over all $(k,\ell,m)\in D_n$ such that $\min\{|k-k^*|,|\ell-k^*|,|m-k^*|\}\geq\frac{1}{2}+o(1)$, 
\begin{equation}\label{expectations_above_small}
f(k,\ell,m)\leq -\frac{\ln\frac{1}{1-p}}{2}\ln n(1+o(1))\biggl[(k-k^*)^2+(\ell-k^*)^2+(m-k^*)^2\biggr].
\end{equation}
\label{lm:delta}
\end{lemma}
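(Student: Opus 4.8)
Throughout write $L:=\ln\frac{1}{1-p}>0$ and $\mathbf v:=(k-k^*,\ell-k^*,m-k^*)$, so that $\ln(1-p)=-L$ and $\ln^2(1-p)=L^2$. The excerpt already supplies $\nabla f(A)=\mathbf 0$, the explicit second derivatives, and $f(A)=O(\ln\ln n)$; the plan is to feed these into the exact second-order Taylor identity
\[
 f(k,\ell,m)=f(A)+\tfrac12\,\mathbf v^{\top}\overline C\,\mathbf v,\qquad \overline C:=2\int_0^1(1-s)\,C\!\left(A+s\mathbf v\right)ds ,
\]
which is valid because $D_n$ is convex and $f$ is smooth on it, so the whole segment $A+s\mathbf v$, $s\in[0,1]$, stays in $D_n$. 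Since $f(A)=o(\ln n)$ while the hypothesis $\min\{|k-k^*|,|\ell-k^*|,|m-k^*|\}\ge\frac12+o(1)$ keeps $\|\mathbf v\|^2$ bounded away from $0$, the constant $f(A)$ is swallowed by the factor $(1+o(1))$. Thus the lemma reduces to a single uniform estimate: $\mathbf v^{\top}\overline C\,\mathbf v\le-L\ln n\,(1+o(1))\,\|\mathbf v\|^2$.

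First I would split the Hessian as $C(\xi)=\operatorname{diag}\bigl(a(\xi_1),a(\xi_2),a(\xi_3)\bigr)-L\,(J-I)$, where $a(t):=-\frac1t-\frac1{t^2}-nL^2(1-p)^t$ and $J$ is the all-ones matrix. The off-diagonal part contributes $-L\bigl[(\textstyle\sum_i v_i)^2-\|\mathbf v\|^2\bigr]\le L\|\mathbf v\|^2=O(\|\mathbf v\|^2)$, which is again absorbed by $(1+o(1))$; so everything comes down to the diagonal sum $\sum_i \overline a_i v_i^2$, where $\overline a_i$ is the average of $a$ along the $i$-th coordinate of the segment. The decisive computation is that, from the formula for $k^*$, $n(1-p)^{k^*}=\frac{\ln n}{L}(1+o(1))$, whence $a(k^*)=-nL^2(1-p)^{k^*}(1+o(1))=-L\ln n\,(1+o(1))$. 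Because $t\mapsto n(1-p)^t$ is decreasing, $a(\xi)\le a(k^*)$ for all $\xi\le k^*$; hence for every target with $\max\{k,\ell,m\}\le k^*$ the whole segment lies in $\{\xi\le k^*\}$, every $\overline a_i\le -L\ln n(1+o(1))$, and the required bound drops out.

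The hard part will be the complementary regime, in which some coordinate exceeds $k^*$. There $n(1-p)^{\xi}\to0$ and $a(\xi)\to0^-$, so the curvature that drove the previous step disappears and the averaged Hessian is no longer $\le -L\ln n(1+o(1))$; the pure Taylor bound is then too weak. For such points I would abandon the second-order estimate on the large coordinate and work instead from the separable decomposition
\[
 f=\phi(k)+\phi(\ell)+\phi(m)-L\,(k\ell+km+\ell m)+C_0,\qquad \phi(t)=t\ln\tfrac nt+\ln t+t-n(1-p)^t,
\]
with $C_0=3L+3\ln p$, using that each $\phi$ is increasing and concave while the coupling $-L(k\ell+km+\ell m)$ turns strongly negative as soon as two coordinates are large. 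The genuinely delicate case, which I expect to be the crux of the lemma, is a single large coordinate with the other two pinned near $k^*$: there the coupling grows only linearly in the large coordinate, so $f$ falls off only linearly in that deviation whereas the right-hand side falls off quadratically. Reconciling these two rates—pinning down exactly how large a deviation the stated quadratic bound can absorb, and whether a coarser but still summable estimate must take over beyond that range—is where essentially all of the technical difficulty concentrates, and it is the step that would demand the most careful tracking of the $\ln n$-order terms in $\phi$ and in the coupling.
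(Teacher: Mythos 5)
Your Taylor--with--integral--remainder argument is fine, but it proves the lemma only on the sub-region where all three coordinates are at most $k^*$: there, writing $L:=\ln\frac{1}{1-p}$, the whole segment from $A$ indeed satisfies your curvature bound $a(\xi)\le -L\ln n\,(1+o(1))$, and the off-diagonal part of the Hessian is absorbed as you say. For the complementary regime you give only a diagnosis and a plan, so as submitted this is not a proof. The serious point, however, is that this gap cannot be closed: your suspicion that the quadratic right-hand side cannot absorb a large positive deviation is correct, and the lemma as stated is \emph{false} there. Take $\ell=m=\lceil k^*\rceil+1$ and $k=\lceil k^*\rceil+\lceil\ln n\rceil$, so that the hypothesis holds with $\Delta_1:=k-k^*=\ln n\,(1+o(1))$ and $\Delta_2,\Delta_3=O(1)$. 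Evaluating \eqref{expectation:power_of_exponent} with $Lk^*=\ln n-\ln\ln n+O(1)$ and $n(1-p)^{k^*}=\frac{\ln n}{L}(1+o(1))$ (both from \eqref{k_star}), the only terms of order $\ln^2 n$ are $k\ln\frac nk$, $\ell\ln\frac n\ell+m\ln\frac nm$, $-Lk(\ell+m)$ and $-L\ell m$, with coefficients summing to $\bigl(1+\tfrac1L\bigr)+\tfrac2L-\bigl(2+\tfrac2L\bigr)-\tfrac1L=-1$; hence
\[
f(k,\ell,m)=-\ln^{2}n\,(1+o(1)),\qquad\text{while}\qquad
-\frac{L}{2}\ln n\,(1+o(1))\sum_{i=1}^{3}\Delta_i^{2}=-\frac{L}{2}\ln^{3}n\,(1+o(1)),
\]
so inequality \eqref{expectations_above_small} fails for all large $n$: in a large positive deviation, $f$ decreases only linearly (times $\ln n$), exactly the rate mismatch you identified.

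For what it is worth, this is also precisely where the paper's own proof breaks, so your localization of the difficulty is more accurate than the paper's treatment of it. The paper expands $f-f(A)$ in the deviations, bounds the cross terms by $\frac12\sum_i\Delta_i^2$, and reduces to the per-coordinate quantity $\frac{L\Delta_i^2}{2}-\frac{\ln n}{L}\gamma(L\Delta_i)$ with $\gamma(x)=x+e^{-x}-1$; its final inequality requires $\gamma(L\Delta_i)\ge\frac{(L\Delta_i)^2}{2}(1+o(1))$, which holds if and only if $\Delta_i\le 0$, whereas the inequality it cites ($\gamma(x)\le x^2/2$ for $x>0$) runs in the opposite direction. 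So the paper's argument, like yours, is valid only when no coordinate exceeds $k^*$; outside that region it is the statement, not the proof technique, that has to change. The bound that is actually true uniformly --- and that both your separable decomposition and the paper's intermediate displays can deliver --- replaces $\Delta_i^2$ by a quantity of order $\min\{|\Delta_i|,\Delta_i^2\}$ (equivalently, $\gamma(L\Delta_i)$ up to constant factors), reflecting linear rather than quadratic decay in positive deviations. This weaker bound still decays geometrically in each coordinate, so the union bound in Section~\ref{n_alpha_section}, and hence Theorem~\ref{mnt:theorem}, are unaffected; but the lemma must be restated in that form, after which your Taylor step settles the case $\Delta_i\le0$ and a first-order (monotonicity of $\partial f/\partial k$) estimate along any coordinate beyond $k^*$ settles the rest.
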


\begin{proof}
Let us set $\Delta_1=k-k^*,\Delta_2=\ell-\ell^*,\Delta_3=m-m^*$. Due to (\ref{expectation:power_of_exponent}),
\begin{align*}
f(k,\ell,m)-f(k^*,k^*,k^*)\quad\quad\leq\quad-\ln\frac{1}{1-p}(\Delta_1\Delta_2+\Delta_1\Delta_3+\Delta_2\Delta_3)\quad&+\\
\sum_{i=1}^3\left(\Delta_{i}\ln n-\ln\frac{(k^*+\Delta_i)^{k^*+\Delta_i}}{(k^*)^{k^*}}-n(1-p)^{k^*}\left((1-p)^{\Delta_{i}}-1\right)-2\Delta_{i}k^*\ln\frac{1}{1-p}+2|\Delta_i|\right)&\leq\\
\sum_{i=1}^3\left(\frac{\Delta_i^2\ln\frac{1}{1-p}}{2}-k^*\ln\frac{k^*+\Delta_i}{k^*}-\Delta_i\ln k^*-\frac{\ln n\left((1-p)^{\Delta_{i}}-1+o(1)\right)}{\ln\frac{1}{1-p}}-\Delta_{i}\ln n(1+o(1))\right)&,
\end{align*}
where the last inequality follows from the inequalities $-\Delta_1\Delta_2-\Delta_1\Delta_3-\Delta_2\Delta_3\leq\frac{1}{2}(\Delta_1^2+\Delta_2^2+\Delta_3^2)$ and $-\Delta_i\ln\left(1+\frac{\Delta_i}{k^*}\right)\leq 0$.

Notice that $-k^*\ln(1+\Delta_i/k^*)\leq-\Delta_i\ln k^* I(\Delta_i\leq 0)$. Indeed, for positive $\Delta_i$, the inequality is obvious. If $\Delta_i\leq 0$, then it is sufficient to verify the inequality only for boundary values $\Delta_i=0$ and $\Delta_i=1-k^*$ (the function $-k^*\ln(1+x/k^*)+x\ln k^*$ changes its monotonicity only once on $[1-k^*,0]$: first, it decreases and, after $x=k^*/\ln k^*-k^*$, it increases). We get
$$
f(k,\ell,m)-f(k^*,k^*,k^*)\leq\sum_{i=1}^3\left(\frac{\Delta_i^2\ln\frac{1}{1-p}}{2}-\Delta_{i}\ln n-\frac{\ln n}{\ln\frac{1}{1-p}}\left((1-p)^{\Delta_{i}}-1\right)\right)(1+o(1))=
$$
$$
\left[\sum_{i=1}^3\frac{\Delta_i^2\ln\frac{1}{1-p}}{2}
-\frac{\ln n}{\ln\frac{1}{1-p}}\sum_{i=1}^3 \gamma\left(\Delta_i\ln\frac{1}{1-p}\right)\right](1+o(1))\leq
\frac{1+o(1)}{2}\ln\frac{1}{1-p}\sum_{i=1}^3\Delta_i^2(1-\ln n),
$$
where $\gamma(x)=x+e^{-x}-1\leq x^2/2$ for all $x>0$. Inequality~(\ref{expectations_above_small}) follows.

\end{proof}


\section{A sequence of small probabilities}\label{n_alpha_section}

Let us find a sequence $(n^{(1)}_i,\,i\in\mathbb{N})$ such that ${\sf P}(\exists k,\ell,m\,\,X(k,\ell,m)>0) \rightarrow 0$ as $i \rightarrow \infty$. 

For $i \in \mathbb{N},$ set 
$$
n:=n^{(1)}_i=\left\lfloor\left(\frac{1}{1-p}\right)^{i+\frac{1}{2}}i\right\rfloor.
$$ 

Clearly, $k^{*}=k^{*}(n)=i+\frac{1}{2}+o(1)$ ($k^*$ is defined in (\ref{k_star})).

Using Lemma~\ref{lm:delta}  and inequality (\ref{expectation_bound_f}), we get that, uniformly over all $k,\ell,m\in D_n$, 
$$
{\sf E}X(k,\ell,m)\leq e^{-\frac{1}{2}\ln\frac{1}{1-p}\ln n(1+o(1))\biggl[(k-k^*)^2+(\ell-k^*)^2+(m-k^*)^2\biggr]+g(k,\ell,m)}.
$$
Notice that 
$$
g(k,\ell,m)<3\biggl[|k-k^*|+|\ell-k^*|+|m-k^*|\biggr]+3k^*\biggl[(1-p)^k+(1-p)^{\ell}+(1-p)^m\biggr]$$ 
and 
$$
3k^*\biggl[(1-p)^k+(1-p)^{\ell}+(1-p)^m\biggr]=o(1)\ln n\biggl[(k-k^*)^2+(\ell-k^*)^2+(m-k^*)^2\biggr].
$$
Therefore, 
$$
{\sf E}X(k,\ell,m)\leq e^{-\frac{1}{2}\ln\frac{1}{1-p}\ln n(1+o(1))\biggl[(k-k^*)^2+(\ell-k^*)^2+(m-k^*)^2\biggr]}.
$$
By the union bound and Markov's inequality,
$$
 {\sf P}\biggl(\exists k,\ell,m\in D_n\quad X(k,\ell,m)>0\biggr)\leq\sum_{k,\ell,m\in D_n}{\sf E}X(k,\ell,m)\leq
 \left[\sum_{j=1}^{\infty}e^{-\frac{j}{8}\ln\frac{1}{1-p}\ln n(1+o(1))}\right]^3=o(1).
$$
Therefore, $(n^{(1)}_i,\,i\in\mathbb{N})$ is the desired sequence.

\section{A sequence of large probabilities}\label{n_beta_section}

Here, we introduce a sequence $(n^{(2)}_i,\,i\in\mathbb{N})$, such that, for some $k=k(n^{(2)}_i)$, ${\sf P}(X(k,k,k)>0)$ is bounded away from $0$ for all $i$  large enough.
For $i\in\mathbb{N}$, define 
\begin{equation}\label{choice_n_2}
n^{(2)}_i=\left\lfloor\left(\frac{1}{1-p}\right)^i i\right\rfloor.
\end{equation}

Notice that $k^{*}=k^{*}(n^{(2)}_i)=i+o(1)$, where $k^*$ is defined in (\ref{k_star}). Setting $n=n_{k}^{(2)}$ for any $k \in \mathbb{N}$, we have
\begin{equation}
\begin{split}
{\sf E}X(k,k,k) & =\frac{n!}{k!k!k!(n-3k)!}k^3(1-p)^{3k^2-3}\cdot p^3
\cdot\left[(1-(1-p)^k)\right]^{3(n-3k)}\\
&=\frac{n^{n}\sqrt{2\pi n}}{k^{3k}\sqrt{(2\pi k)^3}\cdot (n-3k)^{n-3k}\sqrt{2\pi(n-3k)}}\cdot k^3(1-p)^{3k^2-3}p^3\cdot e^{-3n(1-p)^k}(1+o(1)) \\
&=\frac{n^{3k}e^{3k}}{k^{3k}\sqrt{(2\pi)^3}}\left(\frac{p}{1-p}\right)^3 k^{3/2}(1-p)^{3k^2}e^{-3k}(1+o(1))\\
&=\left(\frac{p}{1-p}\right)^3\frac{k^{3/2}}{\sqrt{(2\pi)^3}}(1+o(1)).
\end{split}
\label{expectation_infinite_asymp}
\end{equation}
So, ${\sf E}X(k,k,k)\to\infty$ as $k \rightarrow \infty . $ It remains to prove that $[{\sf E}X(k,k,k)]^2/{\sf E}X^2(k,k,k)$ is bounded away from 0 and apply the Paley--Zygmund inequality~\cite{PZ} (stated below).

\begin{theorem}[Paley--Zygmund inequality]
Let $X$ be a non-negative random variable with ${\sf E}X^2<\infty$. Then for any $0 \leq \lambda<1$,
$$
{\sf P}[X>\lambda {\sf E} X] \geq(1-\lambda)^{2} \frac{({\sf E} X)^{2}}{{\sf E}\left[X^{2}\right]}.
$$
\label{th:PZ}
\end{theorem}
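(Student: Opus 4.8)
The plan is to prove this via a standard first-moment splitting argument followed by a single application of the Cauchy--Schwarz inequality. Fix $\lambda\in[0,1)$ and abbreviate the target event by $E=\{X>\lambda{\sf E}X\}$. The starting point is to decompose the mean according to $E$ and its complement:
$$
 {\sf E}X={\sf E}[X\mathbf{1}_{E^c}]+{\sf E}[X\mathbf{1}_{E}].
$$
On the event $E^c$ we have $X\leq\lambda{\sf E}X$ pointwise, so ${\sf E}[X\mathbf{1}_{E^c}]\leq\lambda{\sf E}X$; here the hypothesis $X\geq 0$ is what ensures every term in sight is non-negative and the estimate is not vacuous. Rearranging yields the lower bound
$$
 (1-\lambda){\sf E}X\leq{\sf E}[X\mathbf{1}_{E}].
$$

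Next I would bound the right-hand side from above by Cauchy--Schwarz, using $\mathbf{1}_E^2=\mathbf{1}_E$ and the finiteness of ${\sf E}X^2$:
$$
 {\sf E}[X\mathbf{1}_{E}]\leq\sqrt{{\sf E}[X^2]}\cdot\sqrt{{\sf E}[\mathbf{1}_E^2]}=\sqrt{{\sf E}[X^2]}\cdot\sqrt{{\sf P}(E)}.
$$
Combining the two displays gives $(1-\lambda){\sf E}X\leq\sqrt{{\sf E}[X^2]}\sqrt{{\sf P}(E)}$. Since both sides are non-negative, I may square and then divide by ${\sf E}[X^2]$ to obtain
$$
 {\sf P}(E)\geq(1-\lambda)^2\frac{({\sf E}X)^2}{{\sf E}[X^2]},
$$
which is exactly the claimed inequality. (The degenerate case ${\sf E}[X^2]=0$ forces $X=0$ almost surely, for which the statement holds trivially, so one may assume ${\sf E}[X^2]>0$ when dividing.)

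There is no real obstacle in this argument; the content is entirely in the Cauchy--Schwarz step, and the only points requiring a moment's care are the sign bookkeeping in the splitting step (where non-negativity of $X$ is used) and the trivial degenerate case just noted. In the application to the present paper, $X$ will be instantiated as $X(k,k,k)$ on $G(n^{(2)}_i,p)$ with $\lambda=0$, so the inequality reduces to ${\sf P}(X>0)\geq ({\sf E}X)^2/{\sf E}[X^2]$, and the remaining work — carried out in Section~\ref{n_beta_section} — is the second-moment estimate showing this ratio is bounded away from $0$.
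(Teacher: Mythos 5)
Your proof is correct. Note that the paper itself gives no proof of this statement: it is quoted as a known result (Theorem~\ref{th:PZ}) with a citation to the original Paley--Zygmund paper~\cite{PZ}, so there is no in-paper argument to compare against. What you have written is the standard proof: the splitting ${\sf E}X={\sf E}[X\mathbf{1}_{E^c}]+{\sf E}[X\mathbf{1}_E]$, the pointwise bound $X\leq\lambda{\sf E}X$ on $E^c$ (valid since $X\geq 0$ and $\lambda\geq 0$), and Cauchy--Schwarz applied to ${\sf E}[X\mathbf{1}_E]$, with the degenerate case ${\sf E}[X^2]=0$ correctly set aside. Each step is sound, including the squaring step, which is legitimate because $(1-\lambda){\sf E}X\geq 0$. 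Your closing remark also matches how the paper actually uses the inequality: in Section~\ref{sec:P-Z} it is applied with $\lambda=0$ to $X=X(k,k,k)$, giving ${\sf P}(X>0)\geq({\sf E}X)^2/{\sf E}[X^2]$.
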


\subsection{Second moment}


Let us call a tuple $\left(X_{1},x_1,X_{2},x_2,X_3,x_3\right)$ a {\it $k$-tuple} if sets $X_1,X_2,X_3\subset[n]$ are disjoint, $|X_1|=|X_2|=|X_3|=k$ and $x_1\in X_1$, $x_2\in X_2$, $x_3\in X_3$. Let us call a $k$-tuple $\left(X_{1},x_1,X_{2},x_2,X_3,x_3\right)$ {\it special}, if it
satisfies the conditions given in Section~\ref{sec:intro}:
\begin{itemize}
\item every vertex $v$ from $[n]\setminus{(X_{1} \cup X_{2}\cup X_3)}$ has at least one neighbor in each $X_i$,
\item for any distinct $i,j\in\{1,2,3\}$, there is exactly one edge between $X_i$ and $X_j$ --- namely, the edge between $x_i$ and $x_j$.
\end{itemize}

Let 
\begin{equation}
\mathcal{X}=(X_1,x_1,X_2,x_2,X_3,x_3) \qquad \text{and} \qquad \mathcal{Y}=(Y_1,y_1,Y_2,y_2,Y_3,y_3),
\label{two_tuples}
\end{equation}
be two $k$-tuples. Everywhere below, we denote 
$$
r:=|(X_1\sqcup X_2\sqcup X_3)\cap (Y_1\sqcup Y_2\sqcup Y_3)|,
$$
$$
r_i:=|Y_i\cap (X_1\sqcup X_2\sqcup X_3)|,\,\,\,
r_{j+3}:=|X_j\cap (Y_1\sqcup Y_2\sqcup Y_3)|,\,\,\,
r_{i,j}:=|Y_i\cap X_j|.
$$  

Let $\Gamma$ be the set of all $k$-tuples. For $\mathcal{X}\in\Gamma$, let $\xi_{\mathcal{X}}$ be the Bernoulli random variable that equals 1 if and only if $\mathcal{X}$ is special. Then $X(k,k,k)=\sum_{\mathcal{X}\in\Gamma}\xi_{\mathcal{X}}$. From this,
$$
 {\sf E}X^2(k,k,k)=\sum_{\mathcal{X},\mathcal{Y}\in\Gamma}\xi_{\mathcal{X}}\xi_{\mathcal{Y}}.
$$


We compute this value in the usual way by dividing the summation into parts with respect to the value of $r$:
\begin{equation}
{\sf E}X^2(k,k,k)=S_1+S_2+S_3,
\label{second_moment_S}
\end{equation}
\begin{itemize}
	\item $S_1=\sum_{\mathcal{X},\mathcal{Y}\in\Gamma:\,\,r\in(r_0,3k-r_0)}\xi_{\mathcal{X}}\xi_{\mathcal{Y}}$,
	\item $S_2=\sum_{\mathcal{X},\mathcal{Y}\in\Gamma:\,\,r\leq r_0}\xi_{\mathcal{X}}\xi_{\mathcal{Y}}$,
	\item $S_3=\sum_{\mathcal{X},\mathcal{Y}\in\Gamma:\,\,r\geq 3k-r_0}\xi_{\mathcal{X}}\xi_{\mathcal{Y}}$,
\end{itemize}
where $r_0=\left\lceil \frac{16}{\ln[1/(1-p)]}\right\rceil$.\\

In Section~\ref{sec:intersection_small}, we give upper bounds on $S_1$ and $S_2$. An upper bound on $S_3$ is given in Section~\ref{sec:intersection_large}. In Section~\ref{sec:P-Z}, we apply the Paley--Zygmund inequality and finish the proof. Auxiliary lemmas that are used for bounds on $S_i$ are given in Section~\ref{sec:aux}.
 
\subsection{Auxiliary lemmas}
\label{sec:aux}

For a $k$-tuple $\mathcal{X}=(X_1,x_1,X_2,x_2,X_3,x_3)$, let $\mathcal{N}_{\mathcal{X}}$ be the event saying that there are no edges between $X_1,X_2,X_3$ except for those between $x_1,x_2,x_3$.

\begin{lemma}\label{no_edges_bound}
    Let $\mathcal{X},\mathcal{Y}$ be $k$-tuples. Then 
    $$
    {\sf P}(\mathcal{N}_{\mathcal{Y}}|\mathcal{N}_{\mathcal{X}})\leq(1-p)^{3k^2-\frac{r^2}{3}-3}.
    $$
\end{lemma}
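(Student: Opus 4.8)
The lemma bounds the conditional probability that tuple $\mathcal{Y}$ has no "extra" edges among its parts, given that $\mathcal{X}$ already has none. The event $\mathcal{N}_{\mathcal{X}}$ forces the $\binom{3}{2}k^2 = 3k^2$ potential cross-edges among $X_1,X_2,X_3$ to be absent (except three special ones). Similarly $\mathcal{N}_{\mathcal{Y}}$ concerns $3k^2$ cross-pairs among the $Y_i$'s. The conditional probability is governed by how many $\mathcal{Y}$-cross-pairs are *new* (not already determined by $\mathcal{N}_{\mathcal{X}}$). The parameter $r$ measures the overlap between $\bigsqcup X_i$ and $\bigsqcup Y_j$; the more overlap, the more $\mathcal{Y}$-cross-pairs coincide with (already-absent) $\mathcal{X}$-cross-pairs, so fewer new conditions, so the bound should *weaken* — which is exactly what $-r^2/3$ in the exponent does.

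**Proof Plan.** First I'd identify the relevant edge set. Let me think about this carefully.

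Let me reconsider. I think the cleanest approach is to count the cross-pairs of $\mathcal{Y}$ that are *not* among the cross-pairs of $\mathcal{X}$, since only those contribute genuinely new $(1-p)$ factors in the conditional probability. A cross-pair of $\mathcal{Y}$ is an unordered pair $\{u,v\}$ with $u\in Y_i$, $v\in Y_j$, $i\neq j$. This pair is already constrained by $\mathcal{N}_{\mathcal{X}}$ only if both $u,v$ lie in $\bigsqcup X_a$ *and* fall into different $X$-blocks. So I would lower-bound the number of new cross-pairs by $3k^2 - (\text{number of } \mathcal{Y}\text{-cross-pairs lying entirely inside }\bigsqcup X_a)$. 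The number of $\mathcal{Y}$-cross-pairs inside $\bigsqcup X_a$ is at most $\binom{r}{2}$-ish — more precisely, it is at most the number of pairs among the $r$ overlap vertices, hence at most $\binom{r}{2} \le r^2/2$. Accounting for the three special edges gives the $-3$.

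**Key calculation.** The bound to establish is that the number of new (forced-absent) cross-pairs is at least $3k^2 - \frac{r^2}{3} - 3$; equivalently, $\mathcal{Y}$-cross-pairs that could overlap $\mathcal{X}$-cross-pairs number at most $\frac{r^2}{3}$ (plus the three special-edge corrections). Here is where I expect the main subtlety, and where the constant $\tfrac13$ rather than $\tfrac12$ comes from: a pair of overlap vertices is a $\mathcal{Y}$-cross-pair only if the two vertices lie in *different* $Y$-blocks. Writing $r_{i,j}=|Y_i\cap X_a \text{ across all } a|$ — actually, letting $s_i := |Y_i \cap \bigsqcup_a X_a|$ so that $\sum_i s_i = r$, the number of overlap pairs split across distinct $Y$-blocks is $\sum_{i<j} s_i s_j = \tfrac12\big(r^2 - \sum_i s_i^2\big)$. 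By Cauchy–Schwarz, $\sum_i s_i^2 \ge r^2/3$, so $\sum_{i<j}s_i s_j \le \tfrac12(r^2 - r^2/3) = \tfrac{r^2}{3}$. This is precisely the claimed $\tfrac{r^2}{3}$. Thus among the $3k^2$ absent cross-pairs of $\mathcal{Y}$, at most $\tfrac{r^2}{3}$ are already forced by $\mathcal{N}_{\mathcal{X}}$, leaving at least $3k^2 - \tfrac{r^2}{3}$ independent $(1-p)$-factors, and the $-3$ absorbs the three special edges. Since the events on these independent pairs are mutually independent of the edges determining $\mathcal{N}_{\mathcal{X}}$, the conditional probability is at most $(1-p)^{3k^2 - r^2/3 - 3}$.

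**Main obstacle.** The delicate point is the bookkeeping around "already forced" pairs and the special edges: one must ensure that a $\mathcal{Y}$-cross-pair counted as already-absent genuinely coincides with an $\mathcal{X}$-cross-pair (same unordered vertex pair, crossing both an $X$-block boundary and a $Y$-block boundary), and that the three special $\mathcal{Y}$-edges are handled without double-counting against the $\mathcal{X}$-special edges. I would be careful to only claim *independence* of the remaining pairs from $\mathcal{N}_{\mathcal{X}}$, rather than asserting the stronger conditional factorization, so that the $(1-p)$ factors multiply correctly. The Cauchy–Schwarz step giving the constant $\tfrac13$ is the conceptual heart and is what makes the exponent tight enough for the later second-moment estimates.
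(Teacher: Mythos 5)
Your proof is correct and follows essentially the same route as the paper: both arguments lower-bound the number of $\mathcal{Y}$-cross-pairs whose edge indicators are unconstrained by $\mathcal{N}_{\mathcal{X}}$, bound the overlapping pairs by $\sum_{i<j} r_i r_j \leq r^2/3$ via the inequality $(r_1+r_2+r_3)^2 \geq 3(r_1r_2+r_2r_3+r_3r_1)$, subtract $3$ for the pairs among $y_1,y_2,y_3$, and conclude by independence. The only (immaterial) difference is that the paper first writes the exact count of doubly-crossing pairs via the $r_{i,j}$ before relaxing to the $r_i$, whereas you relax immediately to pairs with both endpoints in $X_1\sqcup X_2\sqcup X_3$.
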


\begin{figure}[h]
	\center{\includegraphics[scale=0.3]{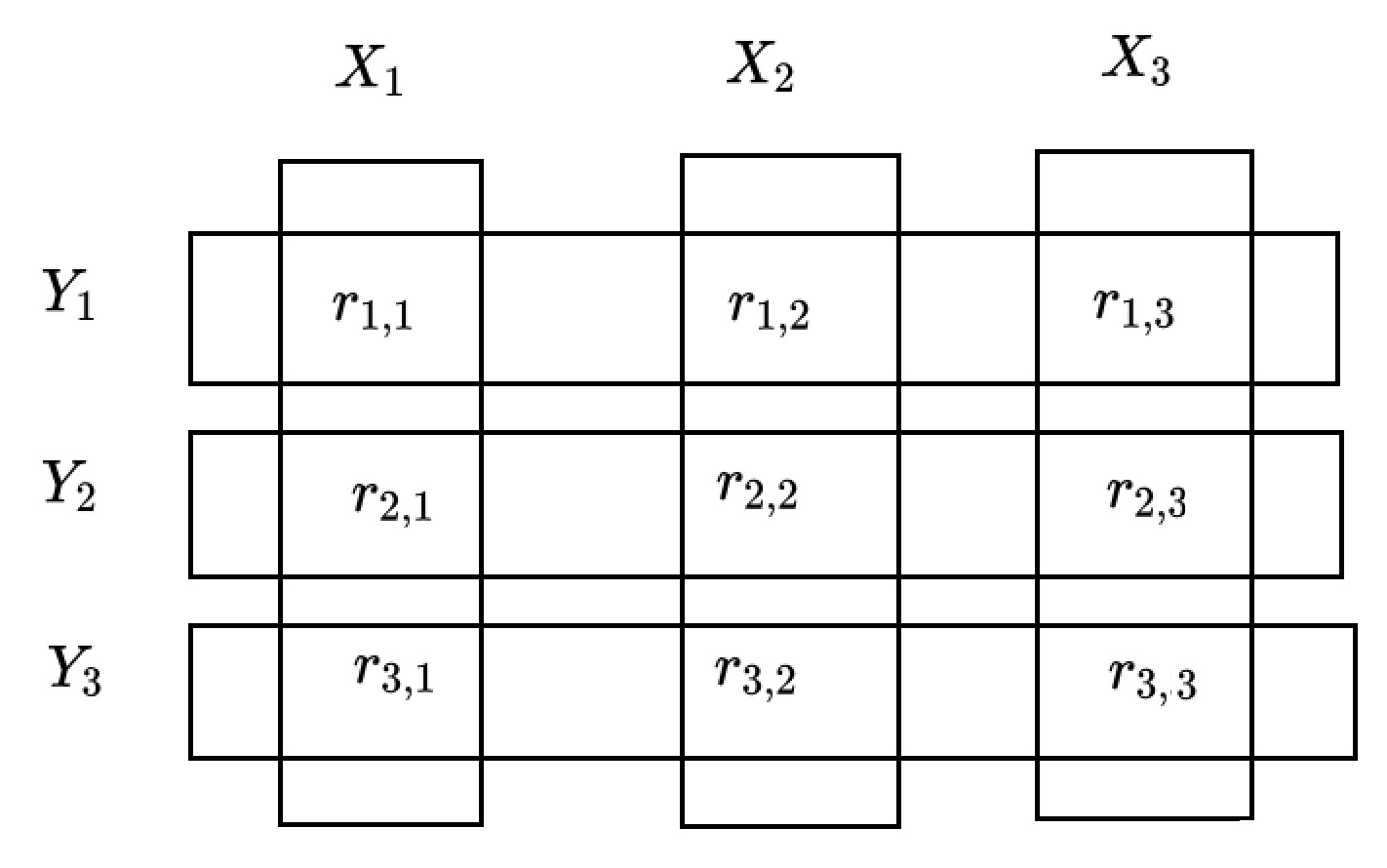}}
	\label{fig:image}
	\caption{two $k$-tuples $X$ and $Y$ with given intersections $r_{i,j}$,  $i,j\in\{1,2,3\}$.}
\end{figure}

\begin{proof}
	The number of pairs $(u,v)\in[n^2]$ such that $u\in Y_i, v\in Y_j$ for some  $i\neq j$ but $u$ and $v$ do not belong to $X_{\tilde i},X_{\tilde j}$ for any distinct $\tilde i, \tilde j\in\{1,2,3\}$ equals
    $$  
    3k^2-\frac{1}{2}\sum_{i,j} r_{i,j}\sum_{\tilde i\neq i,\tilde j\neq j} r_{\tilde i,\tilde j}\geq 3k^2-(r_1r_2+r_2r_3+r_3r_1)\geq 3k^2-\frac{r^2}{3},
    $$	
    where we used that
	\begin{equation}\label{FKG_bound}
	\frac{r^2}{9}=\left(\frac{r_1+r_2+r_3}{3}\right)^2\geq
	\frac{r_1r_2+r_2r_3+r_3r_1}{3}.
	\end{equation}
Finally, it remains to notice that conditional probability ${\sf P}(\mathcal{N}_{\mathcal{Y}}|\mathcal{N}_{\mathcal{X}})$ does not exceed $(1-p)^{3k^2-\frac{r^2}{3}-3}$ since we should exclude no more than 3 pairs of vertices $(u,v)$ that coincide with a pair of vertices from $y_1,y_2,y_3$.
\end{proof}

\begin{lemma}\label{comment}
Let $\mathcal{X},\mathcal{Y}$ be $k$-tuples and there exists $s\in\{1,2,3\}$ such that $r_s>k-r_0$ and $r_{s,j}<k-6r_0$ for all $j\in\{1,2,3\}$. If $r_0<\frac{1}{30}k$, then
$$
{\sf P}(\mathcal{N}_{\mathcal{Y}}|\mathcal{N}_{\mathcal{X}})\leq(1-p)^{3k^2-\frac{r^2}{3}-3+4kr_0}.
$$
\end{lemma}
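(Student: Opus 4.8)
The plan is to follow the proof of Lemma~\ref{no_edges_bound} and reduce the statement to a single combinatorial inequality that the extra hypotheses on $Y_s$ will let us win. Exactly as there, conditioning on $\mathcal{N}_{\mathcal{X}}$ determines only those pairs that are cross-pairs of both $\mathcal{X}$ and $\mathcal{Y}$; every pair that is a cross-pair of $\mathcal{Y}$ but not of $\mathcal{X}$ must still be a non-edge for $\mathcal{N}_{\mathcal{Y}}$, and its state is independent of $\mathcal{N}_{\mathcal{X}}$. There are $3k^2-\Omega$ such pairs, where $\Omega:=\frac12\sum_{i,j}r_{i,j}\sum_{\tilde i\neq i,\tilde j\neq j}r_{\tilde i,\tilde j}$ is the number of common cross-pairs, so (discarding at most three pairs among $y_1,y_2,y_3$) we get ${\sf P}(\mathcal{N}_{\mathcal{Y}}\mid\mathcal{N}_{\mathcal{X}})\le(1-p)^{3k^2-\Omega-3}$. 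Hence it suffices to strengthen the bound $\Omega\le r^2/3$ used in Lemma~\ref{no_edges_bound} to
$$\Omega\le\frac{r^2}{3}-4kr_0.$$

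\emph{Key step.} I would rewrite the defect $\frac{r^2}{3}-\Omega$ in a form symmetric under interchanging the partitions $X_1,X_2,X_3$ and $Y_1,Y_2,Y_3$. Writing $r_i=\sum_j r_{i,j}$ for the row sums and $r_{j+3}=\sum_i r_{i,j}$ for the column sums, a direct expansion gives
$$\frac{r^2}{3}-\Omega=\underbrace{\sum_{i}\sum_{j<j'}r_{i,j}r_{i,j'}}_{=:U}+\underbrace{\frac16\sum_{j<j'}\left(r_{j+3}-r_{j'+3}\right)^2}_{=:V},$$
where $U,V\ge 0$. (Equivalently, $\frac{r^2}{3}-\Omega=T_1+T_2$ with $T_1=\frac16\sum_{i<i'}(r_i-r_{i'})^2$ the row-sum imbalance and $T_2=\sum_j\sum_{i<i'}r_{i,j}r_{i',j}$ the number of cross-pairs of $\mathcal{Y}$ lying inside a single $X_j$; the whole point of the rewriting is the identity $T_1+T_2=U+V$.) Since $V\ge 0$, it is enough to bound $U$ below, and $U$ is at least its single $i=s$ summand $\sum_{j<j'}r_{s,j}r_{s,j'}$.

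It then remains to show $\sum_{j<j'}r_{s,j}r_{s,j'}>4kr_0$, i.e. that $Y_s$ is sufficiently spread. Using $\sum_{j<j'}r_{s,j}r_{s,j'}=\tfrac12\big(r_s^2-\sum_j r_{s,j}^2\big)$, I need an upper bound on $\sum_j r_{s,j}^2$. The hypotheses give $\sum_j r_{s,j}=r_s>k-r_0$ and $r_{s,j}<k-6r_0=:M$ for every $j$, while $r_0<k/30$ forces $r_s\le k<2M$; maximizing $\sum_j r_{s,j}^2$ over $\{0\le r_{s,j}\le M,\ \sum_j r_{s,j}=r_s\}$ is attained at the vertex $(M,r_s-M,0)$, so $\sum_j r_{s,j}^2\le M^2+(r_s-M)^2$ and
$$\sum_{j<j'}r_{s,j}r_{s,j'}\ge M(r_s-M)=(k-6r_0)(r_s-k+6r_0)>5r_0(k-6r_0)=5kr_0-30r_0^2>4kr_0,$$
the last inequality being exactly $30r_0^2<kr_0$, i.e. $r_0<k/30$. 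Combining, $3k^2-\Omega>3k^2-\frac{r^2}{3}+4kr_0$, which yields the claimed bound on ${\sf P}(\mathcal{N}_{\mathcal{Y}}\mid\mathcal{N}_{\mathcal{X}})$.

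The main obstacle is conceptual rather than computational. A naive attempt to bound $T_1+T_2$ directly meets a real tension: forcing the imbalance $T_1$ to be small makes $Y_{j_1},Y_{j_2}$ nearly full, which by the capacity bound $r_{j+3}\le k$ pushes them either onto $Y_s$'s columns or onto one another, making $T_2$ large — and conversely — so any case analysis must track both penalties simultaneously. The symmetric identity $T_1+T_2=U+V$ dissolves this entirely, reducing everything to the one elementary extremal estimate above; recognizing this rewriting is the crux. Note also that $4kr_0$ is deliberately non-tight: the true minimum of $U+V$ is already $(5-o(1))kr_0$ from the $i=s$ term alone, which is precisely what makes the lower-order $-30r_0^2$ error harmless under $r_0<k/30$.
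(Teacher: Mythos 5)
Your proposal is correct and is essentially the paper's own proof: your identity $\frac{r^2}{3}-\Omega=U+V$ is exactly the paper's expansion $\Omega=(r_4r_5+r_4r_6+r_5r_6)-\sum_i\sum_{j<j'}r_{i,j}r_{i,j'}$ combined with its bound $r_4r_5+r_4r_6+r_5r_6\le r^2/3$ (whose slack is precisely your $V$), and both arguments then discard all rows except $i=s$. The only difference is cosmetic, in the final extremal step: the paper lower-bounds $\sum_{j<j'}r_{s,j}r_{s,j'}\ge r_{s,1}(r_s-r_{s,1})$ and uses concavity of $x(k-r_0-x)$ on $[\frac{k-r_0}{3},k-6r_0]$, while you maximize $\sum_j r_{s,j}^2$ at a vertex of the box; both land on the same quantity $5r_0(k-6r_0)>4kr_0$.
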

\begin{proof}
Repeating previous arguments, it is sufficient to prove that $\frac{1}{2}\sum_{i,j} r_{i,j}\sum_{\tilde i\neq i,\tilde j\neq j} r_{\tilde i,\tilde j}\leq\frac{r^2}{3}-4kr_0$. Without loss of generality, let us assume that $r_{s,1}\geq r_{s,2}\geq r_{s,3}$.

Applying (\ref{FKG_bound}) for $r_4,r_5,r_6$, we get
	\begin{multline}
	\frac{1}{2}\sum_{i,j} r_{i,j}\sum_{p\neq i,q\neq j} r_{p,q}=
	(r_4r_5+r_4r_6+r_5r_6)-\sum_{i=1}^3(r_{i,1}r_{i,2}+r_{i,2}r_{i,3}+r_{i,1}r_{i,3})\leq\\ (r_4r_5+r_4r_6+r_5r_6)-(r_{s,1}r_{s,2}+r_{s,1}r_{s,3}+r_{s,2}r_{s,3})\leq\\
	\frac{r^2}{3}-(r_{s,1}r_{s,2}+r_{s,1}r_{s,3}+r_{s,2}r_{s,3})\leq\frac{r^2}{3}-r_{s,1}(r_s-r_{s,1})<\frac{r^2}{3}-r_{s,1}(k-r_0-r_{s,1})\leq\\
	\frac{r^2}{3}-(k-6r_0)(k-r_0-(k-6r_0))=
	\frac{r^2}{3}-5r_0(k-6r_0)\leq\frac{r^2}{3}-4r_0k
	\end{multline}
since the function $f(x)=x(k-r_0-x)$ is concave on $[\frac{k-r_0}{3},k-6r_0]$, and so, it  achieves the minimum value at one of the ends of the segment. Clearly, the value at the right end is smaller.
%
\end{proof}

\begin{lemma}\label{dominatong set bound}
	Let $r_0>0$ be a fixed number. Let $\mathcal{X},\mathcal{Y}$ be $k$-tuples~(\ref{two_tuples}) and $r\leq r_0$. Then the probability that each $X_j$, $j\in\{1,2,3\}$, and each $Y_i$, $i\in\{1,2,3\}$, are dominating sets in $[n]\setminus((X_1\sqcup X_2\sqcup X_3)\cup(Y_1\sqcup Y_2\sqcup Y_3))$ does not exceed $(1-6(1-p)^k)^{n}(1+o(1))$.
\end{lemma}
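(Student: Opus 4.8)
The plan is to exploit the fact that, once the six sets are fixed, the domination requirements imposed by distinct exterior vertices are mutually independent. Write $W=(X_1\sqcup X_2\sqcup X_3)\cup(Y_1\sqcup Y_2\sqcup Y_3)$, so $|W|=6k-r$ and the vertices that must be dominated are exactly those of $[n]\setminus W$, of which there are $n-6k+r$. For $v\in[n]\setminus W$ let $D_v$ be the event that $v$ has at least one neighbor in each of the six sets $X_1,X_2,X_3,Y_1,Y_2,Y_3$. The key observation is that $D_v$ depends only on the edges joining $v$ to $W$; for distinct $v,v'\in[n]\setminus W$ these edge sets are disjoint (no endpoint lies outside $W$ on both sides), so the events $\{D_v\}_{v\in[n]\setminus W}$ are independent and the probability in question equals $\prod_{v\in[n]\setminus W}{\sf P}(D_v)$.

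First I would bound a single factor ${\sf P}(D_v)$ from above. Letting $E_i$ be the event that $v$ has no neighbor in the $i$-th of the six sets, we have $\overline{D_v}=\bigcup_{i=1}^6 E_i$, and the inclusion--exclusion (Bonferroni) lower bound gives ${\sf P}(\overline{D_v})\ge\sum_i{\sf P}(E_i)-\sum_{i<j}{\sf P}(E_i\cap E_j)$. Here ${\sf P}(E_i)=(1-p)^k$ for each $i$, while ${\sf P}(E_i\cap E_j)=(1-p)^{|S_i\cup S_j|}$, with $|S_i\cup S_j|=2k$ for the six within-tuple pairs and $|S_i\cup S_j|\ge 2k-r_0$ for the nine cross pairs (using $r_{i,j}\le r\le r_0$). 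Hence $\sum_{i<j}{\sf P}(E_i\cap E_j)\le 15(1-p)^{2k-r_0}$, and therefore
\[
{\sf P}(D_v)\le 1-6(1-p)^k+15(1-p)^{2k-r_0}.
\]

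Setting $a=(1-p)^k$ and $C=15(1-p)^{-r_0}$ (a constant, as $r_0$ is fixed), it then remains to show
\[
(1-6a+Ca^2)^{\,n-6k+r}\le(1-6a)^n(1+o(1)).
\]
Factoring $1-6a+Ca^2=(1-6a)(1+Ca^2/(1-6a))$ and separating, the left side is at most $(1-6a)^n\cdot(1-6a)^{-(6k-r)}\cdot(1+2Ca^2)^{n}$, so the claim reduces to checking that the two correction factors $(1-6a)^{-(6k-r)}=e^{O(ak)}$ and $(1+2Ca^2)^{n}=e^{O(a^2n)}$ are both $1+o(1)$. Since the relevant $k$ satisfies $k=k^*+o(1)$ with $k^*\sim\ln n/\ln\frac{1}{1-p}$ (see~(\ref{k_star})), we have $a=(1-p)^k=\Theta(\ln n/n)$, whence $ak=\Theta((\ln n)^2/n)\to 0$ and $a^2n=\Theta((\ln n)^2/n)\to 0$; both factors tend to $1$ and the lemma follows.

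The step I expect to require the most care is this final one: controlling the two multiplicative corrections. The argument succeeds only because $a=(1-p)^k$ has order $\ln n/n$, which forces both $ak$ and $a^2n$ to vanish; this is exactly where the scale of $k$ near $k^*$ enters, and one must verify that the second-order Bonferroni term $Ca^2$ and the exponent deficit $6k-r$ are genuinely negligible rather than merely bounded.
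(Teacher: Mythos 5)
Your proof is correct, and its skeleton is the same as the paper's: factor the domination probability over the exterior vertices using independence of the edge sets joining each such vertex to $W$, establish a per-vertex bound of the form $1-6(1-p)^k+O\bigl((1-p)^{2k}\bigr)$, and absorb the resulting correction factors into the $(1+o(1))$ term using the relation $n=\lfloor k(1-p)^{-k}\rfloor$. The one place you genuinely diverge is the derivation of the per-vertex bound. The paper covers the event $\{v\text{ has a neighbor in all six sets}\}$ by three events: $\mathcal{A}$ (a neighbor in every $X_i\setminus Y$ and every $Y_i\setminus X$), $\mathcal{B}$ and $\mathcal{C}$ (a neighbor in some $X_i\cap Y$ but none in $X_i\setminus Y$, and symmetrically), and bounds each, exploiting the disjointness of the shaved sets $X_i\setminus Y$, $Y_i\setminus X$ to get exact product formulas. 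You instead apply the second-order Bonferroni inequality to the six events $E_i=\{v\text{ has no neighbor in the }i\text{-th set}\}$, using $r\le r_0$ only to bound the fifteen pairwise intersections by $(1-p)^{2k-r_0}$. Both routes yield $1-6a+O(a^2)$ with $a=(1-p)^k$; yours is shorter and arguably more transparent. You also spell out the final absorption step --- $(1-6a)^{-(6k-r)}=e^{O(ak)}$ and $(1+O(a^2))^{n}=e^{O(a^2n)}$, both $1+o(1)$ because $a=\Theta(\ln n/n)$ --- which the paper compresses into the single phrase ``recalling that $n=\lfloor k/(1-p)^k\rfloor$''; making that explicit is exactly the right degree of care, since this is where the specific scale of $k$ enters.
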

\begin{proof}
Fix a vertex $v\in[n]\setminus((X_1\sqcup X_2\sqcup X_3)\cup(Y_1\sqcup Y_2\sqcup Y_3))$. Set $X:=X_1\sqcup X_2\sqcup X_3$, $Y:=Y_1\sqcup Y_2\sqcup Y_3$. Then $\{v$ has neighbors in each $X_j$ and each $Y_i\}\subset\mathcal{A}\cup\mathcal{B}\cup\mathcal{C}$, where
	\begin{itemize}
		\item $\mathcal{A}=\biggl\{\forall i\in\{1,2,3\}\quad v$ has neighbors both in  $X_i\setminus Y$ and in $Y_i\setminus X\biggr\}$,
		\item $\mathcal{B}=\biggl\{\exists i\in\{1,2,3\}\quad v$ has a neighbor in $X_i\cap Y$ and does not have a neighbor in $X_i\setminus  Y\biggr\}$,
		\item $\mathcal{C}=\biggl\{\exists i\in\{1,2,3\}\quad v$ has a neighbor in $Y_i\cap X$ and does not have a neighbor in $Y_i\setminus X\biggr\}$.
	\end{itemize}
	Clearly, 
	$$
	{\sf P}(\mathcal{A})=\prod_{i=1}^6(1-(1-p)^{k-r_i}).
	$$ 
	$$
	{\sf P}(v\text{ has a neighbor in }Y_i\cap X\text{ and does not have a neighbor in }Y_i\setminus X)=(1-p)^{k-r_i}(1-(1-p)^{r_i}),
	$$
	$$
	{\sf P}(v\text{ has a neighbor in }X_j\cap Y\text{ and does not have a neighbor in }X_j\setminus Y)=(1-p)^{k-r_{j+3}}(1-(1-p)^{r_{j+3}}).
	$$ 
	Therefore,	
	\begin{gather*}
	{\sf P}(\mathcal{A}\cup\mathcal{B}\cup\mathcal{C})\leq\prod_{i=1}^6(1-(1-p)^{k-r_i})+\sum_{i=1}^6 (1-p)^{k-r_i}(1-(1-p)^{r_i})=\\
	1-\sum_{i=1}^6 (1-p)^{k-r_i}+O\left((1-p)^{2k}\right)+\sum_{i=1}^6 ((1-p)^{k-r_i}-(1-p)^k)=1-6(1-p)^k+O\left((1-p)^{2k}\right).
	\end{gather*}
	So, multiplying over $v\in n\setminus((X_1\sqcup X_2\sqcup X_3)\cup(Y_1\sqcup Y_2\sqcup Y_3))$ and recalling that  $n=\lfloor k/(1-p)^k\rfloor$, we get the desired bound.
\end{proof}

\subsection{Small and medium intersections}
\label{sec:intersection_small}

In this section, we estimate $S_1$ and $S_2$.\\

By Lemma~\ref{no_edges_bound} and Lemma~\ref{dominatong set bound},
\begin{equation}\label{S1}
S_1\leq \sum_{r=r_0}^{3k-r_0}\binom{n}{3k}3^{3k}k^3 \binom{n}{3k-r}\binom{3k}{r} 3^{3k}k^3(1-p)^{3k^2-3}(1-p)^{3k^2-\frac{r^2}{3}-3},
\end{equation}

\begin{equation}\label{S2}
S_2\leq\sum_{r\leq r_0}\frac{n!}{k!k!k!(n-3k)!}(1-p)^{3k^2-3}k^3\sum_{s_1,s_2,s_3} \frac{n^{s_1}n^{s_2}n^{s_3}}{s_1!s_2!s_3!}(1-p)^{3k^2-\frac{r^2}{3}-3} k^3(1-6(1-p)^k)^{n}(1+o(1)),
\end{equation}

where the second summation is over all non-negative integers $s_1,s_2,s_3$ such that $s_1+s_2+s_3=3k-r$ and, for each $i\in\{1,2,3\}$, $|s_i-k|\leq r_0$.\\





From (\ref{S1}), we get that, for $k$ large enough,
\begin{gather*}
S_1\leq
\sum_{r=r_0}^{ 3k-r_0}\left(\frac{en}{3k}\right)^{3k}\left(\frac{en}{3k-r}\right)^{3k-r}2^{3k} 3^{6k}k^{6}(1-p)^{6k^2-6-\frac{r^2}{3}}\leq\\
\sum_{r=r_0}^{ 3k-r_0}\left(\frac{n}{k}\right)^{6k-r}\frac{\left(3^6 2^3\right)^k k^6 e^{6k-r}}{\left(1-\frac{r}{3k}\right)^{3k-r}} (1-p)^{6k^2-6-\frac{1}{3}r^2}
\leq\sum_{r=r_0}^{ 3k-r_0}(1-p)^{rk-6-\frac{r^2}{3}}e^{15k},
\end{gather*}
where the last inequality is obtained from 

\begin{itemize}
\item the definition of $n$ (we get $n\leq k(1-p)^{-k}$), 
\item the inequality $\ln(1-x)>-x-x^2$ that is true for all $x\in(0,1)$ (it is applied here with $x=\frac{r}{3k}$),
\item and the inequality $k^6<C^k$ that is true for any $C>1$ and large enough $k$ (it is applied here with $C=\frac{e^9}{3^62^3}>1$).
\end{itemize}

Finally, we get that
\begin{equation}
S_1<
\sum_{r=r_0}^{3k-r_0}\left((1-p)^{r-\frac{6}{k}-\frac{r^2}{3k}}\cdot e^{15}\right)^k=o(1)
\label{S1_bound}
\end{equation}
since, for $r\in[r_0,3k-r_0]$, we have $r-\frac{6}{k}-\frac{r^2}{3k}\geq r_0(1-\frac{6}{kr_0}-\frac{r_0}{3k})=r_0(1-o(1))$ and due to the choice of $r_0>\frac{16}{\ln[1/(1-p)]}$.\\

It remains to bound $S_2$. For $k$ large enough, we get
\begin{gather*}
S_2\leq
\frac{(n/k)^{3k}}{k^{3/2}\left(1-\frac{3k}{n}\right)^{n-3k}} (1-p)^{6k^2-\frac{r_0^2}{3}-6} k^6(1-6(1-p)^k)^{n}\times \sum_{r\leq r_0}\sum_{s_1,s_2,s_3} \frac{n^{s_1}n^{s_2}n^{s_3}}{s_1!s_2!s_3!} \leq\\
c_1 e^{3k}k^{9/2}(1-p)^{3k^2}(1-6(1-p)^k)^{n}
\sum_{r\leq r_0}\sum_{s_1,s_2,s_3}\frac{n^{3k-r}}{s_1!s_2!s_3!}
\end{gather*}
for some positive constant $c_1$, where the last inequality follows from the definition of $n$ and the inequality $\ln(1-x)\geq-\frac{x}{1-x}$ applied to $x=\frac{3k}{n}$.

Notice that, for $s_1,s_2,s_3$ such that $s_1+s_2+s_3=3k-r$ and $|s_i- k|\leq r_0$, we get that 
$$
s_1!s_2!s_3!\geq\sqrt{s_1s_2s_3}s_1^{s_1}s_2^{s_2}s_3^{s_3}e^{-s_1-s_2-s_3}\geq\sqrt{(k-r_0)^3}\left(k-\frac{r}{3}\right)^{3k-r}e^{r-3k}
$$
since the minimum value of $s_1^{s_1}s_2^{s_2}s_3^{s_3}$ is achieved when $s_1=s_2=s_3$. Therefore, we get
\begin{align}
S_2\,\,&\leq\, c_2e^{3k}k^3(1-p)^{3k^2}\left(1-6(1-p)^k\right)^{n}
\sum_{r\leq r_0}\left(\frac{en}{k-r/3}\right)^{3k-r}\notag\\
&\leq\, r_0c_2e^{3k}k^{3}(1-p)^{3k^2}e^{-6n(1-p)^k}\left(\frac{en}{k}\right)^{3k}
\leq\,\, c_3k^3
\label{S2_bound}
\end{align}
for some positive constants $c_2$ and $c_3$.


\subsection{Large intersections}
\label{sec:intersection_large}

In this section we produce an upper bound for $S_3$. We let $S_3=S_3^1+S_3^2$, where $S_3^1$ is the summation over all $\mathcal{X},\mathcal{Y}$ such that $r>3k-r_0$ and for each $Y_i$ there exists $X_j$ which almost coincides with $Y_i$ (see Figure~\ref{fig:two_types_of_large_intersections}):
\begin{equation}
\forall i\in\{1,2,3\}\,\,\exists j\in\{1,2,3\}:\quad  r_{i,j}\geq k-6r_0.    
\label{condition_one}
\end{equation}


\begin{figure}[h]
	\center{\includegraphics[scale=0.3]{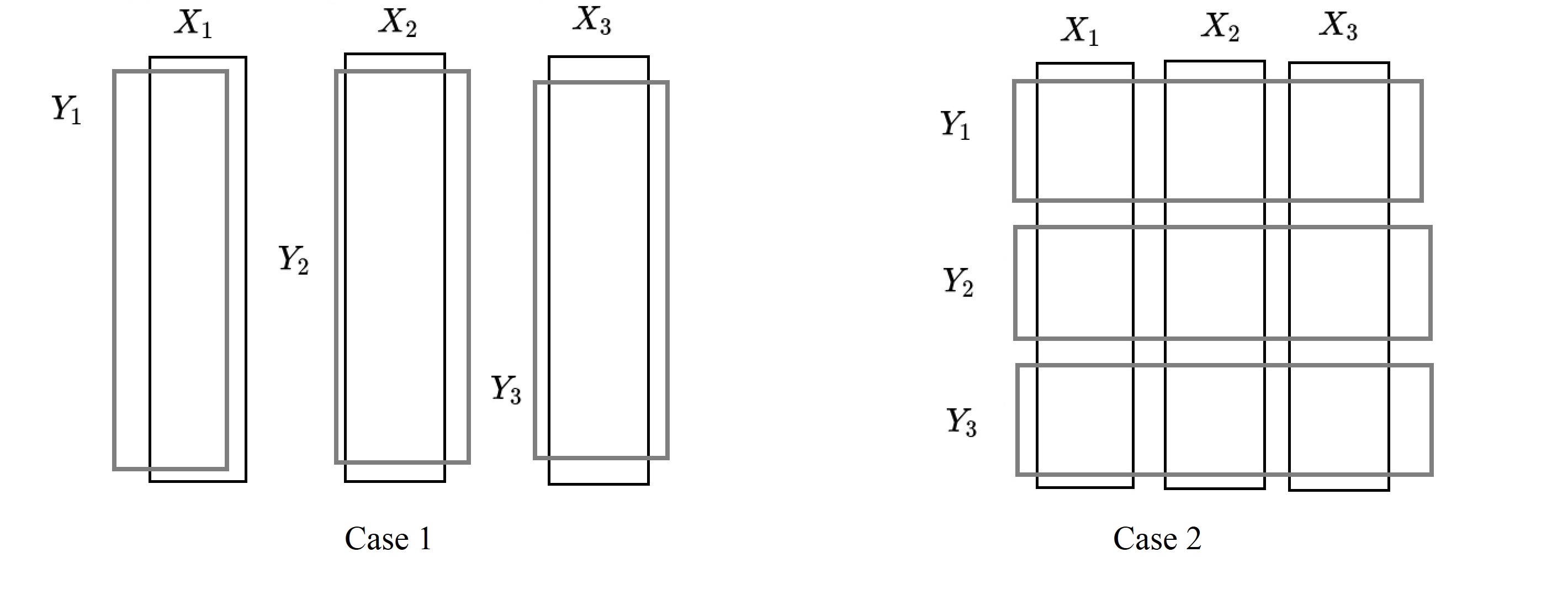}}
	\caption{two types of intersections between $Y_i$ and $X_j$. Case 1 is presented in simplified form (in general, each $Y_i$ may have a non-empty intersection with each $X_j$).}
	\label{fig:two_types_of_large_intersections}
\end{figure}


Notice that given $\mathcal{X}$ and $r_4,r_5,r_6$, the number of ways to choose $Y_i\cap(X_1\sqcup X_2\sqcup X_3)$, $i\in\{1,2,3\}$, is bounded from above by 
$$
\binom{k}{k-r_4}\binom{k}{k-r_5}\binom{k}{k-r_6} 3^{3k-r}\leq
(3k)^{3k-r}.
$$

Also, given $\mathcal{X}$ and $Y_1,Y_2,Y_3$ such that (\ref{condition_one}) holds, the number of choices of $y_1,y_2,y_3$ such that $(Y_1,y_1,Y_2,y_2,Y_3,y_3)$ has a chance to be special is $O(k)$ since, for every possible choice of $(y_1,y_2,y_3)$, there exists $j\in\{1,2,3\}$ such that, for every $i\in\{1,2,3\}$, $y_i$ belongs to either $\{x_1,x_2,x_3\}$, or $(Y_1\sqcup Y_2\sqcup Y_3)\setminus (X_1\sqcup X_2\sqcup X_3)$ (this set has a bounded size), or $X_j$. Indeed, it is not possible that two $y$-vertices belong to different $X$-sets and do not belong to $\{x_1,x_2,x_3\}$ because there are no edges between $X_1,X_2,X_3$ other than those between $x_1,x_2,x_3$.

Finally, given $\mathcal{X}$ and $\mathcal{Y}$, 
$$
 {\sf P}(\mathcal{N}_{\mathcal{Y}}|\mathcal{N}_{\mathcal{X}})\leq (1-p)^{-3+\sum_{i\in\{1,2,3\}}(k-r_i)(r-r_i)}.
$$
Since $r_1+r_2+r_3=r$, we get
$$
	\sum_i (k-r_{i})(r-r_{i})=3kr-k\sum_i r_{i}-r^2+\sum_i r_{i}^2=2kr-r^2+\sum_i r_{i} \geq 2kr-r^2+r^2/3=2r(3k-r)/3.
$$

Combining all the above bounds, we get that there exists $C>0$ such that
	\begin{multline}\label{S_3_1}
	S_3^1\leq \frac{n!}{k!k!k!(n-3k)!} p^3(1-p)^{3k^2-3} k^3 (1-(1-p)^k)^{3(n-6k)}\times\\ 
	Ckn^{3k-r}(3k)^{3k-r} (1-p)^{2r(3k-r)/3}.
	\end{multline}
	


	The product in the first line of (\ref{S_3_1}) asymptotically equals to ${\sf E}X(k,k,k)=O(k^{3/2})$ due to (\ref{expectation_infinite_asymp}). 
	Moreover, 
	$$
	\left(3nk(1-p)^{2r/3}\right)^{3k-r}\leq
	\left(3\left(\frac{1}{1-p}\right)^k k^2(1-p)^{2k-2r_0/3}\right)^{3k-r}=O(1).
	$$
	Therefore, $S_3^1=O(k^{5/2})$.\\
	
	It remains to bounds $S_3^2$. Applying Lemma \ref{comment} and inequalities $3k-r\leq r_0$, $r_0\geq 16/\ln\frac{1}{1-p}$, we get
	\begin{multline*}\label{S_3_2}
	S^2_3\leq \left[\frac{n!}{k!k!k!(n-3k)!} (1-p)^{3k^2-3} k^3(1-(1-p)^k)^{3(n-6k)}\right]
    n^{3k-r} 3^{3k}k^3(1-p)^{4kr_0-3}=\\
    O\left( k^{4.5} 3^{3k}\left(\frac{k}{(1-p)^k}\right)^{3k-r} (1-p)^{4kr_0}\right)=
    O\left( k^{4.5} 3^{3k} k^{r_0} (1-p)^{3kr_0}\right)=o(1).
	\end{multline*}

	Finally,
	\begin{equation}
	S_3=S_3^1+S_3^2=O(k^{5/2}).
	\label{S3_bound}
	\end{equation}

	
\subsection{Final steps}
\label{sec:P-Z}


Set $X=X(k,k,k).$ Due to (\ref{expectation_infinite_asymp}), ${\sf E}X\sim\frac{p^3}{\left(\sqrt{2\pi}(1-p)\right)^3}k^{3/2}$.
On the other hand, combining (\ref{second_moment_S}) with bounds (\ref{S1_bound}), (\ref{S2_bound}), (\ref{S3_bound}), we get that there exists $c>0$ such that
${\sf E}X^2=S_1+S_2+S_3<ck^3$.

By Paley-Zygmund inequality (Theorem~\ref{th:PZ}), for $k$ large enough, 
$$
\mathbb{P}(X>0)\geq \frac{(\mathbb{E}X)^2}{\mathbb{E} X^2}>
\frac{p^6}{2\pi(1-p)^6c}.
$$
Therefore, $(n_i^{(2)},\,i\in\mathbb{N})$ is the desired sequence.

\section{Acknowledgements}

The first author is a Young Russian Mathematics award winner and would like to thank its sponsors and jury. She is supported  by the program ``Leading Scientific Schools''(grant no. NSh-2540.2020.1).
The second author is supported by by the Ministry of Science and Higher Education of the Russian Federation (Goszadaniye No. 075-00337-20-03), project No. 0714-2020-0005.

\end{document}